\newcommand{\R}{\mathbb{R}}
\newcommand{\LG}{\mathbb{LG}}
\newcommand{\N}{\mathbb{N}}
\newcommand{\SO}{\mathbb{SO}}
\newcommand{\SE}{\mathbb{SE}}
\newcommand{\GL}{\mathbb{GL}}
\newcommand{\SL}{\mathbb{SL}}
\newcommand{\cA}{\mathcal{A}}
\newcommand{\cB}{\mathcal{B}}
\newcommand{\cBs}{\mathcal{BS}}
\newcommand{\cH}{\mathcal{H}}
\newcommand{\cG}{\mathcal{G}}
\newcommand{\cLG}{\mathcal{LG}}
\newcommand{\cT}{\mathcal{T}}
\newcommand{\cD}{\mathcal{D}}
\newcommand{\cE}{\mathcal{E}}
\newcommand{\cK}{\mathcal{K}}
\newcommand{\cV}{\mathcal{V}}
\newcommand{\cGL}{\mathcal{GL}}
\newcommand{\cSL}{\mathcal{SL}}
\newcommand{\cSO}{\mathcal{SO}}
\newcommand{\cSE}{\mathcal{SE}}
\def\v{\mathbf{v}}
\def\x{\mathbf{x}}
\def\ze{\mathbf{0}}
\DeclareMathOperator*{\Span}{span}
\newtheorem{theorem}{Theorem}[section]
\newtheorem{proposition}{Proposition}[section]
\newtheorem{definition}{Definition}[section]
\newtheorem{lemma}{Lemma}[section]
\newtheorem{example}{Example}[section]
\newtheorem{remark}{Remark}[section]
\newtheorem{problem}{Problem}
\begin{document}
  \begin{center} \section*{Structural Controllability of Bilinear Systems on $\SE(n)$}{A. Sanand Amita Dilip \footnote{Department of Electrical Engineering, IIT Kharagpur, email: sanand@ee.iitkgp.ac.in}, Chirayu D. Athalye\footnote{Department of Electrical \& Electronics Engineering, BITS Pilani, K.K. Birla Goa Campus, email: chirayua@goa.bits-pilani.ac.in}}\\
 
 \end{center}
  
\begin{abstract}                          
Structural controllability challenges arise from imprecise system modeling and system interconnections in large scale systems. 
In this paper, we study structural control of bilinear systems on the special Euclidean group.
We employ graph theoretic methods to analyze the structural controllability problem for driftless bilinear systems and structural accessibility for bilinear systems with drift.
This facilitates the identification of a sparsest pattern necessary for achieving structural controllability and discerning redundant connections.
To obtain a graph theoretic characterization of structural controllability and accessibility on the special Euclidean group, we introduce a novel idea of solid and broken edges on graphs; subsequently, we use the notion of transitive closure of graphs.
 \end{abstract}
  
\section{Introduction}
\label{sec:intro}

Over the past decade and a half, driven by the advancement of modern computational tools and communication technologies, significant strides have been made in research on networked control systems.
When modeling such large scale systems, the exact numerical entries of the system models are often unknown.
For example, in networked systems, the precise weights of links influencing the strength of interactions are typically not known.
The consequence of even a minor numerical perturbation in a model could be the potential loss of some system properties.
To address such challenges, it is important to analyze the structural properties \cite{Ramos} of the underlying system. 
Since networked control systems are becoming increasingly prevalent, examining their structural properties is critical in both design and analysis problems.
Structural controllability is one such property which is in fact generic, i.e., true for almost all numerical realizations \cite{Boothby1,Lin}.
While studying structural properties of systems, the entries in the respective models are classified into zero and nonzero entries.
The zero and nonzero patterns in the model might be due to the inherent structure of the underlying system or the interconnection of subsystems present in the original system, e.g., multiagent systems \cite{Liu}.
We note that the presence or absence of an interconnecting link is more important than its numerical value. 

Structural controllability for LTI systems was first introduced in \cite{Lin}, where a graph theoretic characterization was obtained.
Graph theoretic methods provide an elegant way of analyzing such structural properties where the construction of the graph depends solely on the zero/nonzero pattern in the system matrices.
We refer the reader to \cite{Ramos,Dion} and the references therein for a comprehensive exploration of structural controllability.
Controllability of nonlinear systems was first characterized in seminal works of \cite{JurdSuss1,JurdSuss}.
Some early works on the control of bilinear systems include \cite{Boothby1,Boothby,Brockett}; for more details, we refer the reader to \cite{Elliot}.

Apart from the commonly employed linear models \cite{Ramos,Dion}, bilinear systems represent an important class of models used to study large scale systems \cite{Tsope,Sarlette} with applications in diverse areas like biology \cite{Williamson}, quantum control \cite{Boussaid}, economics \cite{aoki} and so on. 
Bilinear systems on the special Euclidean group have appeared in synchronization and coordination problems \cite{Sarlette} as well as in the distributed control of rigid bodies \cite{Ibuki}.
Notably, graph theoretic methods have found applications in both structural and nonstructural properties of bilinear systems \cite{Tsope,cz,Wang,Zhang}.
In \cite{Tsope}, driftless bilinear systems on $\R^n$ of the following form are considered: 
\begin{equation}
\label{eq:bilinear-on-R^n}
\dot{\x}(t) = \bigg(\sum_{i=1}^{m}u_i(t)B_i\bigg)\x(t), 
\end{equation}
where $B_1,\ldots,B_m$ are $(n\times n)$ structured matrices belonging to the Lie algebra $\cGL(n)$, $\cSL(n)$ of Lie groups $\GL(n)$, $\SL(n)$, respectively.
The necessary and sufficient conditions for structural controllability and accessibility were obtained using graph theoretic methods.
The main advantage of the graph theoretic equivalent conditions for controllability is the reduced computational cost of checking the Lie algebraic rank conditions.

\subsection{Motivation and Contribution}
Structural controllability serves to elucidate the essential interconnections necessary for system controllability, distinguishing them from redundant connections in the overall structure, particularly in large scale systems.
Furthermore, in the design problems over $\SE(n)$ where the exact system parameters are unknown, the inherent generality of structural controllability becomes pivotal in achieving the desired objectives.
In this paper, building on the insights gained from \cite{Tsope}, we analyze the structural controllability problem for driftless bilinear systems on $\SE(n)$.
However, note that this extension of results in \cite{Tsope} is not straightforward due to some subtleties in the Lie group structure of $\SE(n)$.
We introduce some novel graph constructions (Definitions \ref{def:pattern-graph} and \ref{def:transitive-closure}) to obtain graph theoretic characterizations for structural controllability and accessibility on $\SE(n)$ (Theorems \ref{thm:s-controllability} and \ref{thm:s-accessibility}).
Subsequently, algorithms in graph theory \cite{West,klein} can be effectively applied to check the structural controllability.
This helps in identifying superfluous connections which do not impact controllability. 
As a result, we obtain a sparsest pattern subject to maintaining structural controllability.

\subsection{Organization}
This paper is organized as follows. In Section \ref{sec:prob-statement}, we cover preliminaries and discuss the problem statement.
Graph theoretic characterizations of structural controllability and accessibility for bilinear systems on $\SE(n)$ is given in Section \ref{sec:main-results}.
Subsequently, in Section \ref{sec:link-failures}, we use the graph theoretic characterization to obtain a sparsest pattern for structural controllability.
Finally, we conclude the paper in Section \ref{sec:conclusion}.
Some auxiliary results required to prove the main results are deferred to Appendix.

\subsection{Notation}
The set of positive integers is denoted by $\N$.
An abstract matrix Lie group is denoted by $\LG$ and the corresponding matrix Lie algebra by $\cLG$.
We use $\GL(n)$, $\SL(n)$, $\SO(n)$, $\SE(n)$ to denote the general linear, the special linear, the special orthogonal, and the special Euclidean groups; whereas $\cGL(n)$, $\cSL(n)$, $\cSO(n)$, $\cSE(n)$ are used to denote their respective Lie algebras. 
$\cG(\cV,\cE)$ denotes a graph with $\cV$ as the vertex set and $\cE$ as the edge set.
The complete graph on $n$ vertices is denoted by $\cK_n$. The space of structured matrices with nonzero pattern $\Lambda$ is denoted by $\cB_\Lambda$.
The entries of structured matrices taking arbitrary real values are denoted by $\ast$. 

\section{Problem Statement and Preliminaries}
\label{sec:prob-statement}

\subsection{Controllability and Accessibility of Bilinear Systems}
\label{subsec:background}
In this subsection, we succinctly develop a background needed to discuss the problem statement.
Consider a driftless bilinear control system of the form: 
\begin{equation}
\label{eq:driftless-bilinear}
\dot{X}(t) = \bigg(\sum_{i=1}^{m}u_i(t)B_i\bigg)X(t),\quad X(0)=X_0,
\end{equation}
where $X(\cdot) \in \LG$ and $B_1,\ldots,B_m \in \cLG$.
The Lie algebra $\cLG$ is the tangent space to a matrix Lie group $\LG$ at the identity element. More specifically, $\cLG$ is a vector space closed under the Lie bracket operation defined as 
\begin{equation*}
\big[A_1,A_2\big] := A_1A_2-A_2A_1 \qquad \forall A_1,A_2\in\cLG.
\end{equation*}
If $A_1,\ldots,A_k\in \cLG$, then the vector space constructed using the repeated Lie bracket operation on the set $\{A_1,\ldots,A_k\}$ is called the Lie subalgebra generated by $\{A_1,\ldots,A_k\}$.
We refer readers to \cite{Hall} for more details on Lie groups and Lie algebras. 

Next, we briefly cover the concept of controllability for \eqref{eq:driftless-bilinear}; 
see \cite{JurdSuss1,JurdSuss} for more details.
A point $Y\in \LG$ is attainable from $X(0)=X_0\in \LG$ at $t_1 \geq 0$ if there exists $u_1(\cdot),\ldots,u_m(\cdot)$ such that $X(t_1)=Y$.
The set of attainable points from $X_0$ is given by
\begin{equation}
\label{eq:attainable-set}
\cA(X_0) := \bigcup_{t \in [0,\infty)}\cA(X_0,t),
\end{equation}
where $\cA(X_0,t)$ is the set of attainable points from $X_0$ at time $t$.
The system \eqref{eq:driftless-bilinear} is said to be {\em controllable from $X_0$} if $\cA(X_0)=\LG$, and if it is controllable from every $X_0\in \LG$, then we say that \eqref{eq:driftless-bilinear} is controllable.
For the ease of reference later, we state below \cite[Th. 7.1]{JurdSuss} which gives an equivalent condition for controllability of a driftless bilinear system. 
This condition is known as the Lie Algebraic Rank Condition (LARC). 
\begin{theorem}[\!\!\cite{JurdSuss}]
\label{thm:LARC}
The driftless bilinear system given by \eqref{eq:driftless-bilinear} is controllable if and only if $\LG$ is connected and the Lie subalgebra generated by $\{B_1,\ldots,B_m\}$ is equal to $\cLG$. 
\end{theorem}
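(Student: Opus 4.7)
The plan is to prove the two directions of the biconditional separately, with the forward (necessity) direction being relatively quick and the reverse (sufficiency) direction requiring a careful analysis of the orbit structure induced by the driftless bilinear form. Throughout, let $\cH := \mathrm{Lie}(B_1,\ldots,B_m) \subseteq \cLG$ denote the Lie subalgebra generated by $\{B_1,\ldots,B_m\}$, and let $H$ denote the connected, immersed Lie subgroup of $\LG$ whose Lie algebra is $\cH$.

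For \emph{necessity}, suppose \eqref{eq:driftless-bilinear} is controllable. If $\LG$ were disconnected, then $\cA(X_0)$, being a union of continuous trajectories emanating from $X_0$, would lie entirely in the path component of $X_0$ and could not equal $\LG$; so $\LG$ must be connected. To show $\cH = \cLG$, I would verify $\cA(X_0) \subseteq H \cdot X_0$: each one-parameter subgroup $t\mapsto \exp(tB_i)$ lies in $H$, so any piecewise-constant control produces a product of such exponentials acting on $X_0$, which remains in $H\cdot X_0$. Since $H\cdot X_0$ is an immersed submanifold of $\LG$ of dimension $\dim \cH$, a strict inclusion $\cH \subsetneq \cLG$ would force $\cA(X_0)\neq \LG$, contradicting controllability.

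For \emph{sufficiency}, the key observation is that the driftless bilinear system is \emph{symmetric}: since the $u_i$ take arbitrary real values, the attainable set is closed under the flows of both $+B_i$ and $-B_i$. Choosing $u_j\equiv 0$ for $j\neq i$ and $u_i\equiv \pm 1$ on successive intervals shows that $\cA(X_0)$ contains every product
\begin{equation*}
\exp(t_k B_{i_k})\,\exp(t_{k-1} B_{i_{k-1}})\cdots \exp(t_1 B_{i_1})\, X_0, \qquad t_j \in \R,\ i_j \in \{1,\ldots,m\}.
\end{equation*}
This set is precisely $G\cdot X_0$, where $G$ is the subgroup of $\LG$ algebraically generated by $\{\exp(tB_i):t\in\R,\ i=1,\ldots,m\}$. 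A standard result in Lie theory (e.g.\ via Yamabe's theorem or the Sussmann--Stefan orbit theorem) identifies $G$ with the connected immersed Lie subgroup $H$ whose Lie algebra is $\cH$. Under the hypothesis $\cH = \cLG$, the subgroup $H$ has full dimension; since $\LG$ is connected, this forces $H=\LG$. Hence $\cA(X_0)\supseteq \LG\cdot X_0 = \LG$, proving controllability.

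The main obstacle is the geometric-algebraic identification $G = H$, i.e.\ verifying that the subgroup generated by the one-parameter flows $\exp(tB_i)$ has Lie algebra equal to the bracket-closed subspace $\cH$. The nontrivial inclusion requires producing motions in iterated bracket directions $[B_i,B_j], [B_i,[B_j,B_k]],\ldots$ from concatenated flows; the essential analytic input is the commutator expansion $\exp(sB_i)\exp(tB_j)\exp(-sB_i)\exp(-tB_j) = \exp\bigl(st[B_i,B_j]+O((|s|+|t|)^3)\bigr)$, together with a rescaling/limiting argument showing that directions in $\cH$ are tangent to the orbit. Once this identification is in place, the remaining step (a connected Lie subgroup of full dimension in a connected Lie group equals the whole group) is elementary.
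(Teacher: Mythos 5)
This theorem is stated in the paper as a quotation of \cite{JurdSuss} (Th.~7.1) and is not proved there, so the only meaningful comparison is with the cited source; your sketch reproduces the standard Jurdjevic--Sussmann argument---necessity via containment of the attainable set in the coset $H\cdot X_0$ of the connected integral subgroup $H$ of the generated subalgebra together with a dimension count, and sufficiency via the symmetry of the driftless system plus the identification of the group generated by the flows $\exp(tB_i)$ with $H$ and the fact that a full-dimensional (hence open) subgroup of a connected group is the whole group---and it is correct in outline. The one point to tighten is in the necessity direction: admissible controls need not be piecewise constant, so instead of writing trajectories as finite products of exponentials you should argue that any solution of $\dot X(t)=\bigl(\sum_{i} u_i(t)B_i\bigr)X(t)$ remains in $H\cdot X_0$ because its velocity lies in the tangent space of that immersed coset at every time (the orbit/integral-manifold argument), after which the measure-zero/dimension argument rules out $\mathrm{Lie}(B_1,\ldots,B_m)\subsetneq\cLG$ exactly as you state.
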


Now, consider a bilinear system with drift on $\LG$:
\begin{equation}
\label{eq:drift-bilinear}
\dot{X}(t) = B_0 X(t) + \bigg(\sum_{i=1}^{m}u_i(t)B_i\bigg)X(t), \quad X(0) = X_0,
\end{equation}
where $B_0,B_1,\ldots,B_m \in \cLG$.
Controllability and accessibility of bilinear systems with drift have been studied from graph theoretic viewpoint in \cite{Wang} on Lie groups $\GL(n),\SL(n),\SO(n)$.
We briefly cover the concept of accessibility \cite{JurdSuss,JurdSuss1} for bilinear systems with drift on an abstract Lie group. 
The system \eqref{eq:drift-bilinear} is said to be {\em accessible from $X_0$} if the set of attainable points $\cA(X_0)$ given by \eqref{eq:attainable-set} has a nonempty interior, and it is said to be accessible if $\cA(X_0)$ has a nonempty interior for all $X_0\in \LG$. The following well-known result characterizes accessibility for bilinear systems with drift.

\begin{proposition}[\!\!\cite{JurdSuss1,JurdSuss}]
\label{prop:accessibility}
The bilinear system with drift given by \eqref{eq:drift-bilinear} is accessible if and only if $\LG$ is connected and the Lie subalgebra generated by $\{B_0,B_1,\ldots,B_m\}$ is equal to $\cLG$.  
\end{proposition}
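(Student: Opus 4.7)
The plan is to recast the system as an analytic control system on $\LG$ driven by right-invariant vector fields and then apply the orbit/accessibility theorem of Sussmann--Jurdjevic; everything rests on identifying the abstract Lie subalgebra $\mathfrak{L}\subseteq \cLG$ generated by $\{B_0, \ldots, B_m\}$ with the Lie algebra of vector fields generated by the right-invariant extensions of the $B_i$.

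First I would associate to each $B_i$ the right-invariant vector field $\vec{B}_i(X) := B_i X$, so that \eqref{eq:drift-bilinear} reads $\dot{X}(t) = \vec{B}_0(X) + \sum_{i=1}^{m} u_i(t)\,\vec{B}_i(X)$. A direct computation using right-invariance yields $[\vec{B}_i, \vec{B}_j](X) = [B_j, B_i]\,X$, so by induction the Lie algebra of vector fields generated by $\{\vec{B}_0, \ldots, \vec{B}_m\}$ is exactly $\{\vec{A} : A \in \mathfrak{L}\}$. Evaluated at any $X \in \LG$, this subspace is $\mathfrak{L}\cdot X$, which has dimension $\dim \mathfrak{L}$ since right-translation by $X$ is a diffeomorphism.

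Next, I would invoke Sussmann's Orbit Theorem: the orbit $\mathcal{O}(X_0)$ through $X_0$ is an immersed connected submanifold of $\LG$ and, by analyticity of the Lie group and of the right-invariant vector fields, its tangent space at each $X$ coincides with $\mathfrak{L}\cdot X$. Hence $\mathcal{O}(X_0)$ is open in the identity component of $\LG$ iff $\mathfrak{L} = \cLG$; connectedness of $\LG$ then promotes this to openness in $\LG$ itself, uniformly in $X_0$. The final bookkeeping step is the accessibility direction: since $\cA(X_0) \subseteq \mathcal{O}(X_0)$, openness of the orbit in $\LG$ is necessary for $\cA(X_0)$ to have nonempty interior in $\LG$; for sufficiency I would quote the Nagano--Sussmann accessibility theorem for real-analytic control-affine systems, which states that $\cA(X_0,t)$ has nonempty interior in the orbit whenever the accessibility Lie algebra attains full rank at $X_0$ and $t>0$. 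Chaining the equivalences gives the claim.

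The main obstacle I anticipate is the presence of the drift $B_0$. In the driftless setting, Chow's theorem gives $\cA(X_0) = \mathcal{O}(X_0)$ outright, so the orbit theorem alone suffices; here $B_0$ enters unsigned, time cannot be reversed, and $\cA(X_0)$ is generically a proper subset of $\mathcal{O}(X_0)$. The resolution is precisely the analyticity of the right-invariant vector fields on $\LG$, which powers the Nagano--Sussmann step and rules out the pathologies that can occur in the merely smooth category. Translating the vector-field rank condition back into the algebraic condition $\mathfrak{L} = \cLG$ on $\{B_0, B_1, \ldots, B_m\}$ is then immediate from the identification in the first step.
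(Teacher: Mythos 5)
The paper offers no proof of Proposition \ref{prop:accessibility} at all: it is quoted verbatim from \cite{JurdSuss1,JurdSuss}, and your argument is essentially the standard proof found in those references --- pass to the right-invariant vector fields $X\mapsto B_iX$, identify the Lie algebra of vector fields they generate with the right-invariant extension of the matrix subalgebra $\mathfrak{L}$ generated by $\{B_0,B_1,\ldots,B_m\}$ (the bracket reversal $[\vec B_i,\vec B_j](X)=[B_j,B_i]X$ is harmless, as you note, since it does not change the generated subalgebra), and then combine the analytic orbit theorem with a Krener-type accessibility theorem, using $\cA(X_0)\subseteq\mathcal{O}(X_0)$ for necessity. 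Two points deserve care. First, your statement that the accessibility rank condition gives $\cA(X_0,t)$ nonempty interior at an \emph{exact} time $t>0$ is stronger than what that condition delivers (exact-time interior is a strong-accessibility statement, governed by the ideal generated by $B_1,\ldots,B_m$); but since accessibility here only concerns the union $\cA(X_0)=\bigcup_{t\in[0,\infty)}\cA(X_0,t)$ in \eqref{eq:attainable-set}, the within-time version suffices and your chain of equivalences is unaffected. Second, connectedness of $\LG$ does no real work in your argument (the identity component is open, so an orbit open in it is already open in $\LG$), and your proof does not --- and, with the definition of accessibility used here, cannot --- establish the ``accessible $\Rightarrow\LG$ connected'' half of the stated equivalence; in \cite{JurdSuss1,JurdSuss} connectedness is a standing hypothesis (it is controllability, as in Theorem \ref{thm:LARC}, that genuinely requires it), and that is how the proposition should be read. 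With that reading, your proof is correct and is the intended one.
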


\subsection{Structural Controllability and Accessibility}
The set of structured matrices on $\cLG\subseteq \R^{n\times n}$ corresponding to $\Lambda \subseteq \{1,2,\ldots,n\}\times\{1,2,\ldots,n\}$ is given by
\begin{align*}
\label{eq:struct_matrices_Lie_alg}
\cB_\Lambda := \big\{ B\in \cLG \mid &\, B(i,j) =\, \ast \mbox{ if }(i,j)\in\Lambda \mbox{ and } \nonumber\\ 
&\, B(i,j)= 0 \mbox{ otherwise} \big\}.
\end{align*}
Notice that the entries indexed by $\Lambda$ can be arbitrary real numbers which comply with the properties of $\cLG$, while the remaining entries are set to zero.
The choice of $\Lambda$ determines the nonzero pattern of $\cB_\Lambda$.
The set $\Lambda$ is intricately linked to the interaction pattern among the state variables in complex systems. 
This allows for a flexible and dynamic exploration of the structural properties of the system based on the specified interaction pattern.
\begin{definition}[Structural Controllability]
\label{def:s-controllability}
The pattern $\cB_\Lambda \subseteq \cLG$ is said to be structurally controllable if there exists $m\in\N$ and $B_1,\ldots,B_m \in \cB_\Lambda$ such that \eqref{eq:driftless-bilinear} is controllable.
\end{definition}
In fact, if there exists $m\in\N$ which satisfies the condition in Definition \ref{def:s-controllability}, then $\cB_\Lambda$ is also said to be {\em $m$-structurally controllable}. 
Note that for $\cB_\Lambda$ to be $m$-structurally controllable, the LARC condition given by Theorem \ref{thm:LARC} must be satisfied for at least one numerical realization of $B_1,\ldots,B_m \in \cB_\Lambda$.
In \cite{Tsope}, graph theoretic necessary and sufficient conditions for the structural controllability of driftless bilinear systems \eqref{eq:bilinear-on-R^n} were obtained for $\cB_\Lambda\subseteq \cGL(n)$ and $\cB_\Lambda\subseteq \cSL(n)$.   
Furthermore, it was shown that the structural controllability for bilinear systems of the form \eqref{eq:bilinear-on-R^n} is a generic property \cite[Th. III.1]{Tsope}, which means it is true for almost all numerical realizations of $B_1,\ldots,B_m \in \cB_\Lambda$. 
On similar lines, it can be shown that the structural controllability of \eqref{eq:driftless-bilinear} is also generic.
\begin{definition}[Structural Accessibility]
\label{def:s-accessibility}
The pattern $\cB_\Lambda \subseteq \cLG$ is said to be structurally accessible if there exists $m \in \N$ and $B_0,B_1,\ldots,B_m\in \cB_\Lambda$ such that \eqref{eq:drift-bilinear} is accessible. 
\end{definition}
Similar to structural controllability, if there exists $m\in \N$ satisfying the condition in Definition \ref{def:s-accessibility}, then $\cB_\Lambda$ is called {\em $m$-structurally accessible}.

In this paper, we analyze the structural controllability and structural accessibility for bilinear systems \eqref{eq:driftless-bilinear} and \eqref{eq:drift-bilinear}, respectively, on $\SE(n)$. The special Euclidean group $\SE(n)$ consists of distance and orientation preserving transformations on $\R^n$, and it is a connected Lie group.
In particular, a special Euclidean transformation $T$ on $\R^n$ can be defined as $T(\x) := Q\x+\v$ where $Q \in \SO(n)$ and $\v\in\R^n$; in other words, it is a rotation followed by a translation.
This can be expressed as a linear transformation on $\R^{n+1}$ as follows:
\begin{equation*}
\begin{bmatrix} Q & \v \\ \ze^\top & 1 \end{bmatrix} \begin{bmatrix} \x \\ 1 \end{bmatrix} = \begin{bmatrix} Q\x+\v \\ 1 \end{bmatrix}.
\end{equation*}
This gives a representation of $\SE(n)$ as a matrix group in $\R^{(n+1)\times (n+1)}$.
Let $\Omega_{ij} \in \R^{n \times n}$ denote the skew-symmetric matrix with $1$ in the $(i,j)^{\rm th}$ entry, $-1$ in the $(j,i)^{\rm th}$ entry, and zero elsewhere.
Let
\begin{equation}
\label{eq:Omega-tilde}
\tilde{\Omega}_{ij} = \begin{bmatrix} \Omega_{ij} & \ze \\ \ze^\top & 0 \end{bmatrix} \in \R^{(n+1)\times(n+1)},
\end{equation}
and $E_{ij} \in \R^{(n+1)\times(n+1)}$ denote the matrix with $1$ in the $(i,j)^{\rm th}$ entry and zero elsewhere.
The Lie algebra $\cSE(n)$ of the Lie group $\SE(n)$ is a subspace of $\R^{(n+1)\times(n+1)}$ having dimension $n(n+1)/2$.
The standard basis of $\cSE(n)$ is given by 
\begin{equation}
\label{eq:std-basis_SE(n)} 
\cBs := \big\{\tilde{\Omega}_{ij} \mid 1 \le i<j \le n \big\} \cup \big\{E_{k(n+1)} \mid 1\le k\le n \big\}.
\end{equation}
Notice that for matrices in $\cSE(n)$, the $(n+1)^{\rm th}$ row is identically zero.

\subsection{Problem Formulation}
\label{subsec:problems}
Let $\cB_\Lambda$ be a set of structured matrices in $\cSE(n)$ with locations of nonzero entries given by $\Lambda \subseteq \{1,2,\ldots,n\}\times\{1,2,\ldots,n+1\}$. 
We consider the following structured bilinear systems:
\begin{equation}
\label{eq:driftless-bilinear-SE(n)}
\dot{X}(t) = \bigg(\sum_{i=1}^{m}u_i(t)B_i\bigg)X(t),\quad X(0)=X_0,
\end{equation} 
\begin{equation}
\label{eq:drift-bilinear-SE(n)}
\dot{X}(t) = B_0 X(t) + \bigg(\sum_{i=1}^{m}u_i(t)B_i\bigg)X(t), \quad X(0) = X_0,
\end{equation}
where $X(\cdot) \in \SE(n)$ and $B_1,\ldots,B_m \in \cB_\Lambda \subseteq \cSE(n)$. Note that for a given $\Lambda$, we have $\cB_\Lambda = \Span(\cBs_{\Lambda})$, where
\begin{align}
\cBs_{\Lambda} := \big\{\tilde{\Omega}_{ij} \mid 1 \le i<j \le n \mbox{ and } (i,j) \in \Lambda &\big\} \;\cup \nonumber \\
\big\{E_{k(n+1)} \mid (k,n+1)\in \Lambda &\big\} \label{eq:std-basis_B-lambda} 
\end{align}
is the standard basis for $\cB_\Lambda$.

In this paper, we address the following two problems.
\begin{problem}
\label{prob-1}
Find graph theoretic equivalent conditions for the structural controllability of \eqref{eq:driftless-bilinear-SE(n)} and structural accessibility of \eqref{eq:drift-bilinear-SE(n)}.
\end{problem}
\begin{problem}
\label{prob-2}
Minimize $|\Lambda|$ subjected to the structural controllability of \eqref{eq:driftless-bilinear-SE(n)}.
\end{problem}
The equivalent conditions obtained for Problem \ref{prob-1} allow us to identify the essential interconnections for structural controllability and accessibility. 
Subsequently, these conditions are leveraged to solve Problem \ref{prob-2} which deals with the situation where there is a uniform cost associated with each nonzero entry in the pattern $\cB_\Lambda$. Each of these nonzero entries corresponds to an interconnection between the state variables. 
Since the cost associated with each nonzero entry is the same, the total cost is indeed proportional to $|\Lambda|$ which measures the sparsity of the pattern $\cB_\Lambda$.
Thus, Problem \ref{prob-2} is naturally applicable in the design of large-scale systems, where the aim is to achieve structural controllability with minimum interconnections. 
Notice that using $\cB_\Lambda \subseteq \cSE(n)$, the corresponding uniform cost matrix $C \in \R^{(n+1) \times (n+1)}$ can be defined as follows:
\begin{equation*}
\label{eq:uniform-cost-matrix}
C(i,j) := 
\begin{cases}
1, & \text{ if }  i\neq j \mbox{ and } 1\le i\le n,  \\
0, & \text{ otherwise.}
\end{cases}
\end{equation*}
Recall that for matrices in $\cSE(n)$, the $(n+1)^{\rm th}$ row is identically zero.
We also discuss the above problem under non-uniform costs.

\subsection{Preliminaries}
\label{subsec:preliminaries}
We now introduce some tools starting with basic graph theory which is used in Section \ref{sec:main-results} to obtain equivalent conditions for structural controllability.
An undirected graph $\cG(\cV,\cE)$ consists of a vertex set $\cV$ and an edge set $\cE$.
An edge connecting vertices $i,j\in \cV$ is denoted as $(i,j)\in \cE$. 
The degree of a vertex is the number of edges incident on that vertex. 
A simple graph is a graph without loops and multiple edges.
A graph is said to be complete if every pair of distinct vertices is connected by a unique edge.
In a directed graph (digraph), the order of vertices in a pair $(i,j)$ determines the direction of an edge.
A path between two vertices $i_1,i_k$ is the sequence of vertices $\{i_1,i_2,\ldots,i_k\}$ such that $(i_j,i_{j+1})\in \cE$ for $j=1,\ldots,k-1$.
An undirected graph is said to be connected if there exists a path between every pair of vertices, otherwise it is said to be disconnected.
Next, we define a graph structure which is used in Section \ref{sec:main-results} for obtaining graph theoretic equivalent conditions for the structural controllability of \eqref{eq:driftless-bilinear-SE(n)}. 
\begin{definition}
\label{def:pattern-graph}
For $\cB_\Lambda \subseteq \cSE(n)$, we associate the undirected simple graph $\cG\big(\cB_\Lambda\big)$ with the vertex set $\cV = \{1,\ldots,n+1\}$ and the edge set $\cE = \cE_s \cup \cE_b$, where
\begin{align*}
\cE_s &:= \big\{(i,j) \mid  \tilde{\Omega}_{ij} \in \cBs_\Lambda \big\}, \\
\cE_b &:= \big\{(k,n+1) \mid E_{k(n+1)} \in \cBs_\Lambda \big\},
\end{align*}
and $\cBs_\Lambda$ given by \eqref{eq:std-basis_B-lambda} is the standard basis of $\cB_\Lambda$.
The edges in $\cE_s$ are denoted by solid lines, and $\cE_s$ is called the set of solid edges. 
Whereas the edges in $\cE_b$ are denoted by broken lines, and $\cE_b$ is called the set of broken edges.
\end{definition}
Inspired from \cite{Tsope}, we define below the transitive closure of undirected simple graphs considered in Definition \ref{def:pattern-graph}.
\begin{definition}
\label{def:transitive-closure}
Let $\cG(\cV,\cE)$ be an undirected simple graph corresponding to $\cB_\Lambda \subseteq \cSE(n)$.
The $l^{\rm th}$ transitive closure of $\cG$, denoted by $\cG^{(l)}$, is defined as follows.
Let $\cG^{(0)} = \cG$ with $\cE^{(0)} = \cE_s \cup \cE_b$.
\begin{algorithmic}
\For{$1\le l \le n$}
\If{$(i,j),(j,k)\in \cE^{(l-1)}_s$}  $(i,k)\in \cE^{(l)}_s$.
\ElsIf{$(i,j),(j,k)\in \cE^{(l-1)}$, where one edge belongs to $\cE^{(l-1)}_s$ and the other belongs to $\cE^{(l-1)}_b$} $(i,k)\in \cE^{(l)}_b$.
\ElsIf{$(i,j),(j,k)\in \cE^{(l-1)}_b$} $(i,k)\notin \cE^{(l)}$.
\EndIf
\EndFor
\end{algorithmic}
The $n^{\rm th}$ transitive closure of $\cG(\cV,\cE)$ is the transitive closure. 
\end{definition}

We mention some standard Lie bracket identities satisfied by elements of $\cBs$ given by \eqref{eq:std-basis_SE(n)}
\begin{subequations}
\label{eq:ELA-LB}
\begin{align}
& \big[\tilde{\Omega}_{ij},\tilde{\Omega}_{kl} \big] = \delta_{jk}\tilde{\Omega}_{il}+\delta_{il}\tilde{\Omega}_{jk}+\delta_{jl}\tilde{\Omega}_{ki}+\delta_{ik}\tilde{\Omega}_{lj}, \label{eq:ELA-LB-1} \\
& \big[E_{i(n+1)},E_{j(n+1)} \big] = 0, \label{eq:ELA-LB-2} \\
& \big[\tilde{\Omega}_{ij},E_{k(n+1)} \big] = \delta_{jk} E_{i(n+1)} - \delta_{ik} E_{j(n+1)} \label{eq:ELA-LB-3}
\end{align}
\end{subequations}
where 
\begin{equation*}
\label{eq:def_K-delta}
\delta_{ij} := \begin{cases}
1, & \mbox{ if }  i=j, \\
0, & \mbox{ otherwise,} 
\end{cases}
\end{equation*}
is the Kronecker delta function.
In particular, \eqref{eq:ELA-LB-1} follows trivially from \cite[Lem. 2.3]{cz}; whereas \eqref{eq:ELA-LB-2} and \eqref{eq:ELA-LB-3} follow from construction. The above identities and the concept of derived distributions \cite[Sec. III]{Tsope}, which is explained next, are used in the next section to obtain graph theoretic equivalent conditions for structural controllability and accessibility of bilinear systems on $\SE(n)$.
Given a subspace $\cD^{(0)}\subseteq \cSE(n)$, the $i^{\rm th}$ derived distribution of $\cD^{(0)}$ is given by
\begin{equation}
\label{eq:derived-distribution}
\cD^{(i)}:=\cD^{(i-1)} \oplus \Span \!\big\{[A_1,A_2] \mid A_1,A_2\in \cD^{(i-1)} \big\}.
\end{equation}
The Lie subalgebra generated by $\cD^{(0)}$, using derived distributions, is denoted by $\overline{\cD}$.

Suppose $\cD^{(0)} =\cB_\Lambda \subseteq \cSE(n)$ is a {\em canonical} subspace\footnote{A subspace of $\cSE(n)$ formed by the span of a subset of the standard basis $\cBs$.} of $\cSE(n)$. 
Then for an $i^{\rm th}$ derived distribution $\cD^{(i)}$, we can associate the pattern $\cB_{\Lambda^{(i)}}$ of structured matrices; this follows from \eqref{eq:ELA-LB} and \eqref{eq:derived-distribution}.
Thus, for the subspace $\cD^{(i)}$, we can associate an undirected graph $\cG\big(\cD^{(i)}\big)$ using Definition \ref{def:pattern-graph}.  
Conversely, for an $i^{\rm th}$ transitive closure $\cG^{(i)}$ of $\cG\big(\cB_\Lambda\big)$, we can associate a subspace $\cB\big(\cG^{(i)}\big) \subseteq \R^{(n+1)\times (n+1)}$ of structured matrices as follows: 
\begin{equation}
\label{eq:subspace-B}
\cB\big(\cG^{(i)}\big)(j,k) = 
\begin{cases}
\ast, & \text{if } (j,k) \in \cG^{(i)} \mbox{ and }  j<k, \\
0, & \text{if } (j,k) \notin \cG^{(i)} \mbox{ and } j<k, \\
0, & \text{if } j=k \mbox{ or } j=n+1, 
\end{cases}
\end{equation}
together with $\cB\big(\cG^{(i)}\big)(k,j)=-\cB\big(\cG^{(i)}\big)(j,k)$ for $1\le j<k\le n$.

\section{Main Results}
\label{sec:main-results}

The following theorem gives graph theoretic equivalent conditions for the structural controllability of \eqref{eq:driftless-bilinear-SE(n)}. Subsequently, it is extended to the structural accessibility (Theorem \ref{thm:s-accessibility}) for systems with drift on $\SE(n)$.
\begin{theorem}
\label{thm:s-controllability}
Consider \eqref{eq:driftless-bilinear-SE(n)} with $\cB_\Lambda \subseteq \cSE(n)$; let $\cG\big(\cB_\Lambda\big)$ be its associated graph. 
The following are equivalent:
\begin{enumerate}[label=(\roman*)]
\item The pattern $\cB_\Lambda$ is structurally controllable.
\item The transitive closure of $\cG\big(\cB_\Lambda\big)$ is the complete graph on $(n+1)$ vertices.
\item The graph $\cG\big(\cB_\Lambda\big)$ is connected on $\cV =\{1,\ldots,n+1\}$, and $\cG\big(\cB_\Lambda(1:n,1:n)\big)$ is also connected on vertices $\{1,\ldots,n\}$.
\end{enumerate}
\end{theorem}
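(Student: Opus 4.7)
The plan is to prove the chain $(i) \Leftrightarrow (ii) \Leftrightarrow (iii)$ by first establishing a bridge between the algebraic and graph-theoretic sides: the derived distributions of $\cB_\Lambda$ and the iterated transitive closures of $\cG(\cB_\Lambda)$ correspond step by step. Concretely, setting $\cD^{(0)} = \cB_\Lambda$ (a canonical subspace of $\cSE(n)$), I would argue by induction on $l$ that $\cD^{(l)}$ is again a canonical subspace and that the graph associated to it via \eqref{eq:subspace-B} is exactly $\cG^{(l)}$. The inductive step reduces to the three Lie bracket identities \eqref{eq:ELA-LB-1}--\eqref{eq:ELA-LB-3}, which precisely mirror the three cases of Definition \ref{def:transitive-closure}: solid--solid brackets $[\tilde\Omega_{ij},\tilde\Omega_{jk}]$ yield a solid generator $\tilde\Omega_{ik}$; solid--broken brackets $[\tilde\Omega_{ij},E_{j(n+1)}]$ yield a broken generator $E_{i(n+1)}$; and broken--broken brackets vanish by \eqref{eq:ELA-LB-2}. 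Since $\cSE(n)$ is finite dimensional and each nontrivial derivation step strictly enlarges the canonical basis, the sequence $\cD^{(l)}$ stabilizes in at most $n$ steps, so $\overline{\cD^{(0)}}$ corresponds to $\cG^{(n)}$, the transitive closure of $\cG(\cB_\Lambda)$.

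Given this correspondence, the equivalence $(i) \Leftrightarrow (ii)$ follows from Theorem \ref{thm:LARC}. For any $B_1,\ldots,B_m \in \cB_\Lambda$, the Lie subalgebra they generate is contained in the Lie subalgebra generated by $\cB_\Lambda$ itself, and equality is attained by choosing $B_1,\ldots,B_m$ to be the standard basis $\cBs_\Lambda$ of $\cB_\Lambda$. Therefore (i) is equivalent to $\overline{\cB_\Lambda} = \cSE(n)$, which by the correspondence above is equivalent to $\cG^{(n)}$ being the complete graph on $(n+1)$ vertices, i.e., to (ii).

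For $(ii) \Leftrightarrow (iii)$, the argument is purely combinatorial. In one direction, if $\cG^{(n)}$ is complete, then in particular it is connected, so $\cG(\cB_\Lambda)$ must be connected; moreover, any edge between two vertices of $\{1,\ldots,n\}$ in $\cG^{(n)}$ must be solid, and solid edges arise only via iterated solid--solid composition, so the restriction of $\cG(\cB_\Lambda)$ to $\{1,\ldots,n\}$ must also be connected. In the other direction, connectedness of the solid subgraph on $\{1,\ldots,n\}$ produces every solid edge in the closure through repeated application of the first case in Definition \ref{def:transitive-closure}; and connectedness of $\cG(\cB_\Lambda)$ on $\{1,\ldots,n+1\}$ forces the existence of at least one broken edge $(l,n+1)$, from which every broken edge $(k,n+1)$ is generated by the solid--broken rule once the solid edge $(k,l)$ is present in the closure.

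The principal obstacle is the inductive correspondence $\cD^{(l)} \leftrightarrow \cG^{(l)}$: one must rule out the possibility that a basis element predicted by the combinatorial rule is accidentally cancelled when expanding Lie brackets of arbitrary linear combinations of basis vectors. This is handled by taking the $B_i$'s to be the basis $\cBs_\Lambda$ itself, so that each derivation step produces new standard basis elements with nonzero coefficients, and by invoking the genericity of structural controllability on $\SE(n)$ noted after Definition \ref{def:s-controllability}, which eliminates pathological cancellations in the ``there exists'' quantifier of Definition \ref{def:s-controllability}.
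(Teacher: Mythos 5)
Your proposal is correct and follows essentially the same route as the paper: the inductive correspondence you sketch between the derived distributions $\cD^{(l)}$ and the iterated closures $\cG^{(l)}$ via the bracket identities \eqref{eq:ELA-LB-1}--\eqref{eq:ELA-LB-3} is exactly the content of the paper's Lemma \ref{lem:core-lemma} (proved in the Appendix), after which LARC gives $(i)\Leftrightarrow(ii)$ and the solid/broken bookkeeping gives $(ii)\Leftrightarrow(iii)$. The only cosmetic differences are that you inline the lemma and spell out $(ii)\Leftrightarrow(iii)$ more explicitly, while the appeal to genericity in your last paragraph is unnecessary (the derived distribution is a span over all brackets of the subspace, so no cancellation issue arises).
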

\begin{proof}
 Since $\SE(n)$ is connected, we can conclude using Theorem \ref{thm:LARC} and Definition \ref{def:s-controllability} that $\cB_\Lambda$ is structurally controllable if and only if the Lie subalgebra generated by the subspace $\cB_\Lambda$ is equal to $\cSE(n)$.

$(i)\Rightarrow(ii)$: Since $\cB_\Lambda$ is structurally controllable, the Lie subalgebra generated by the subspace $\cB_\Lambda$ is $\cSE(n)$. 
By Lemma \ref{lem:core-lemma}, with $\cD^{(0)} =\cB_\Lambda$ and $\cG^{(0)} =\cG\big(\cB_\Lambda\big)$, we have $\overline{\cD}=\cD^{(n)}=\cSE(n)$ and $\cG\big(\cD^{(n)}\big) = \cG^{(n)}$. 
As a result, $\cG^{(n)}$ is the complete graph; recall from Definition \ref{def:transitive-closure} that $\cG^{(n)}$ is the transitive closure of $\cG\big(\cB_\Lambda\big)$.

$(ii)\Rightarrow(i)$: By Lemma \ref{lem:core-lemma}, with $\cD^{(0)} =\cB_\Lambda$ and $\cG^{(0)} =\cG\big(\cB_\Lambda\big)$, we have ${\cB}\big(\cG^{(n)}\big) = \cD^{(n)}$. Consequently, as $\cG^{(n)}$ is the complete graph on $(n+1)$ vertices, it follows from the construction of $\cB$ (see \eqref{eq:subspace-B}) that $\cD^{(n)}={\cB}\big(\cG^{(n)}\big)=\cSE(n)$. 
Since the the Lie subalgebra generated by $\cB_\Lambda$ is $\cSE(n)$, the pattern $\cB_\Lambda$ is structurally controllable. 

$(ii)\Leftrightarrow(iii)$: Follows from Definitions \ref{def:pattern-graph} and \ref{def:transitive-closure}. 

\end{proof}

The notion of solid and broken edges in Definitions \ref{def:pattern-graph} and \ref{def:transitive-closure} 
is primarily used for obtaining the equivalent conditions in Theorem \ref{thm:s-controllability}.
Therefore, the above result is not a consequence of \cite[Th. III.9]{Tsope}; this is further clarified in Remark \ref{rmk:auxiliary-results}.
It follows from Theorem \ref{thm:s-controllability} that to check the structural controllability of $\cB_\Lambda$, it is enough to check the connectedness of the associated graphs $\cG\big(\cB_\Lambda\big)$ and $\cG\big(\cB_\Lambda(1:n,1:n)\big)$. 
Moreover, note that the graph $\cG\big(\cB_\Lambda(1:n,1:n)\big)$ consist of only solid edges, and we need it to be connected for structural controllability.
However, as far as leveraging graph algorithms from the literature \cite{West} for checking the condition (iii) in Theorem \ref{thm:s-controllability} is concerned, it does not matter whether an edge is solid or broken. 
Checking the connectedness of an undirected graph can be done using depth first search or breadth first search algorithms \cite{klein} which require $O(|\cV|+|\cE|)$ time where $\cV,\cE$ are vertices and edges of $\cG\big(\cB_\Lambda\big)$.

\begin{example}
\label{ex:s-controllable system}
Consider the bilinear system \eqref{eq:driftless-bilinear-SE(n)} on $\SE(3)$ with the nonzero pattern $\Lambda =\{(1,2),(2,3),\\(1,4)\}$ which determines $\cB_\Lambda$.
By Definition \ref{def:transitive-closure}, the $l^{\rm th}$ transitive closure of $\cG\big(\cB_\Lambda\big)$ for $l=0,1,2$ are as shown in Figure \ref{fig:transitive-closure}.
\begin{figure}[ht]
\centering
\begin{subfigure}[b]{0.2\textwidth}
\centering
\begin{tikzpicture}[node distance={22mm}, thin, main/.style = {draw, circle}] 
\node[main,fill={rgb:red,0;green,100;blue,100},scale=0.60] (1) {$1$}; 
\node[main,fill={rgb:red,0;green,100;blue,100},scale=0.60] (4) [above right of=1] {$4$}; 
\node[main,fill={rgb:red,0;green,100;blue,100},scale=0.60] (2) [below right of =4] {$2$}; 
\node[main,fill={rgb:red,0;green,100;blue,100},scale=0.60] (3) [above of=4] {$3$}; 
\draw[line width=1.6pt] (1) -- (2); 
\draw[line width=1.6pt] (2) -- (3); 
\draw[line width=1.6pt] (1) -- (4)[dashed]; 
\end{tikzpicture} 
\caption{$\cG^{(0)}=\cG\big(\cB_\Lambda\big)$}
\end{subfigure} \hfill
\begin{subfigure}[b]{0.2\textwidth}
\centering
\begin{tikzpicture}[node distance={22mm}, thin, main/.style = {draw, circle}] 
\node[main,fill={rgb:red,0;green,100;blue,100},scale=0.60] (1) {$1$}; 
\node[main,fill={rgb:red,0;green,100;blue,100},scale=0.60] (4) [above right of=1] {$4$}; 
\node[main,fill={rgb:red,0;green,100;blue,100},scale=0.60] (2) [below right of =4] {$2$}; 
\node[main,fill={rgb:red,0;green,100;blue,100},scale=0.60] (3) [above of=4] {$3$}; 
\draw[line width=1.6pt] (1) -- (2); 
\draw[line width=1.6pt] (2) -- (3); 
\draw[line width=1.6pt] (1) -- (3); 
\draw[line width=1.6pt] (1) -- (4)[dashed]; 
\draw[line width=1.6pt] (2) -- (4)[dashed]; 
\end{tikzpicture} 
\caption{$\cG^{(1)}$}
\end{subfigure} \hfill
\begin{subfigure}[b]{0.2\textwidth}
\centering
\begin{tikzpicture}[node distance={22mm}, thin, main/.style = {draw, circle}] 
\node[main,fill={rgb:red,0;green,100;blue,100},scale=0.60] (1) {$1$}; 
\node[main,fill={rgb:red,0;green,100;blue,100},scale=0.60] (4) [above right of=1] {$4$}; 
\node[main,fill={rgb:red,0;green,100;blue,100},scale=0.60] (2) [below right of =4] {$2$}; 
\node[main,fill={rgb:red,0;green,100;blue,100},scale=0.60] (3) [above of=4] {$3$}; 
\draw[line width=1.6pt] (1) -- (2); 
\draw[line width=1.6pt] (2) -- (3); 
\draw[line width=1.6pt] (1) -- (3); 
\draw[line width=1.6pt] (1) -- (4)[dashed]; 
\draw[line width=1.6pt] (2) -- (4)[dashed]; 
\draw[line width=1.6pt] (3) -- (4)[dashed]; 
\end{tikzpicture}
\caption{$\cG^{(2)}$}
\end{subfigure}
\caption{$l^{\rm th}$ transitive closure of $\cG\big(\cB_\Lambda\big)$ in Example \ref{ex:s-controllable system}.}
\label{fig:transitive-closure}
\end{figure}
Since the transitive closure $\cG^{(2)}$ is the complete graph, it follows from Theorem \ref{thm:s-controllability} that $\cB_\Lambda$ is structurally controllable.
Also, notice the following:
\begin{enumerate}[label=(\roman*)]
\item Both $\cG\big(\cB_\Lambda\big)$ and $\cG\big(\cB_\Lambda(1:n,1:n)\big)$ are connected.
\item The Lie subalgebra formed by $B_1=\tilde{\Omega}_{12}$, $B_2=\tilde{\Omega}_{23}$ and $B_3=E_{14}$ is equal to $\cSE(3)$.
\end{enumerate}
\end{example}

\begin{example}
\label{ex:s-uncontrollable system}
Consider the bilinear system \eqref{eq:driftless-bilinear-SE(n)} on $\SE(3)$ with the nonzero pattern $\Lambda=\{(1,4),(3,4),\\(1,2)\}$.
\begin{figure}[ht]
\centering
\begin{tikzpicture}[node distance={22mm}, thin, main/.style = {draw, circle}] 
\node[main,fill={rgb:red,0;green,100;blue,100},scale=0.60] (1) {$1$}; 
\node[main,fill={rgb:red,0;green,100;blue,100},scale=0.60] (4) [above right of=1] {$4$}; 
\node[main,fill={rgb:red,0;green,100;blue,100},scale=0.60] (2) [below right of =4] {$2$}; 
\node[main,fill={rgb:red,0;green,100;blue,100},scale=0.60] (3) [above of=4] {$3$}; 
\draw[line width=1.6pt] (1) -- (2); 
\draw[line width=1.6pt] (3) -- (4)[dashed]; 
\draw[line width=1.6pt] (1) -- (4)[dashed]; 
\end{tikzpicture} 
\caption{$\cG\big(\cB_\Lambda\big)$ for the system in Example \ref{ex:s-uncontrollable system}.}
\label{fig:s-uncontrollable}
\end{figure}
Notice from Figure \ref{fig:s-uncontrollable} that $\cG\big(\cB_\Lambda\big)$ is connected, but $\cG\big(\cB_\Lambda(1:n,1:n)\big)$ is not connected. 
Therefore, by Theorem \ref{thm:s-controllability}, $\cB_\Lambda$ is structurally uncontrollable. 
\end{example}


In \cite{Tsope}, the problem of finding the minimum number of inputs for a structurally 
controllable pattern $\cB_{\Lambda}$ is considered; this is known as the minimum controllability problem.
A probability one algorithm is given in \cite{Tsope} to check if the system is controllable with $k$ inputs, $k\ge 2$ ($k$-controllability).
The same algorithm \cite[Algorithm 1]{Tsope} can be used to solve the minimum controllability problem for a structurally controllable pattern over $\SE(n)$.

Besides, structural controllability with multiple patterns is also studied in \cite{Tsope}, and results analogous to single pattern are obtained.
In this paper, we have studied only single pattern structured systems. However, using the concept of solid and broken edges, results in \cite{Tsope} can be extended for structural controllability on $\SE(n)$ for multiple patterns.

Recall from the proof of Theorem \ref{thm:s-controllability} that the Lie algebra generated by $\{B_0,B_1,\ldots,B_m\}$ is equal to $\cSE(n)$ if and only if the transitive closure of $\cG\big(\cB_{\Lambda}\big)$ is the complete graph.
Now, by Proposition \ref{prop:accessibility}, we get the following result which provides a graph based equivalent condition for structural accessibility.

\begin{theorem}
\label{thm:s-accessibility}
Consider \eqref{eq:drift-bilinear-SE(n)} with $B_0,\ldots,B_m\in\cB_{\Lambda}\subseteq \cSE(n)$; let $\cG\big(\cB_{\Lambda}\big)$ be its associated graph. 
Then $\cB_\Lambda$ is structurally accessible if and only if the transitive closure of $\cG\big(\cB_\Lambda\big)$ is the complete graph.
\end{theorem}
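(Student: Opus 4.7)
The plan is to reduce structural accessibility of the drift system \eqref{eq:drift-bilinear-SE(n)} to the very same Lie-algebraic condition that characterizes structural controllability of the driftless system \eqref{eq:driftless-bilinear-SE(n)}, and then invoke the equivalence (i)$\Leftrightarrow$(ii) of Theorem \ref{thm:s-controllability}. The proof should therefore be short, largely a bookkeeping argument built on top of the already-established controllability result.

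First, I would apply Proposition \ref{prop:accessibility} together with the fact that $\SE(n)$ is a connected Lie group to rewrite the accessibility of \eqref{eq:drift-bilinear-SE(n)} for a specific tuple $(B_0,B_1,\ldots,B_m)$ purely as the statement that the Lie subalgebra generated by $\{B_0,B_1,\ldots,B_m\}$ equals $\cSE(n)$. By Definition \ref{def:s-accessibility}, structural accessibility of $\cB_\Lambda$ is then equivalent to the existence of some $m\in\N$ and some $B_0,B_1,\ldots,B_m\in\cB_\Lambda$ for which this Lie-algebraic equality holds.

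Next, I would argue that this is precisely the condition characterizing structural controllability of the driftless pattern $\cB_\Lambda$. Since the defining property is symmetric in the generators, if $B_0,B_1,\ldots,B_m\in\cB_\Lambda$ generate $\cSE(n)$ as a Lie subalgebra, then relabelling $B'_i:=B_{i-1}$ for $i=1,\ldots,m+1$ produces $B'_1,\ldots,B'_{m+1}\in\cB_\Lambda$ satisfying Definition \ref{def:s-controllability}; the reverse implication is immediate, as one may simply duplicate a generator to serve as the drift. Hence structural accessibility of the drift system is equivalent to structural controllability of the corresponding driftless system. Applying (i)$\Leftrightarrow$(ii) of Theorem \ref{thm:s-controllability} then yields: $\cB_\Lambda$ is structurally accessible if and only if the transitive closure of $\cG\big(\cB_\Lambda\big)$ is the complete graph on $n+1$ vertices.

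There is no real obstacle: Lemma \ref{lem:core-lemma} and the notion of solid/broken edges did all of the heavy lifting inside the proof of Theorem \ref{thm:s-controllability}. The only delicate point worth stating carefully is that the drift term $B_0$ plays no distinguished structural role here — it is drawn from the same pattern $\cB_\Lambda$ as the control matrices, so including or excluding it does not change the Lie subalgebra that the collection generates. Once that is noted and connectedness of $\SE(n)$ is invoked (for Proposition \ref{prop:accessibility}), the theorem follows as an immediate corollary of the driftless case.
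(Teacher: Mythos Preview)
Your proposal is correct and follows essentially the same route as the paper: the paper remarks (immediately before the theorem) that the proof of Theorem~\ref{thm:s-controllability} already shows the Lie subalgebra generated by $\{B_0,\ldots,B_m\}\subseteq\cB_\Lambda$ equals $\cSE(n)$ iff the transitive closure of $\cG(\cB_\Lambda)$ is complete, and then invokes Proposition~\ref{prop:accessibility} together with connectedness of $\SE(n)$. Your explicit relabeling step to identify structural accessibility with structural controllability is a minor cosmetic detour, but the underlying argument is the same.
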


\section{Sparsity and Structural Controllability}
\label{sec:link-failures}
In this section, we discuss Problem \ref{prob-2}. 
We use the condition (iii) of Theorem \ref{thm:s-controllability} as an equivalent condition for structural controllability. 
The following theorem addresses Problem \ref{prob-2}.
\begin{theorem}
\label{thm:optimal s-controllability}
Let $\cT_{n+1}$ be the set of spanning trees on $(n+1)$ vertices such that if $\cG_{T}\in \cT_{n+1}$, then $\cG_{T}$ restricted to vertices $\{1,2,\ldots,n\}$ is a spanning tree.
Then, there is a one to one correspondence between the elements of $\cT_{n+1}$ and solutions of Problem \ref{prob-2}.
\end{theorem}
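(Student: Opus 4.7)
The plan is to use condition (iii) of Theorem \ref{thm:s-controllability} as the working characterization of structural controllability and then reduce Problem \ref{prob-2} to a purely combinatorial minimum-edge problem on $\cG\big(\cB_\Lambda\big)$. Recall that broken edges are of the form $(k,n+1)$, while solid edges lie inside $\{1,\ldots,n\}$; in particular $\cG\big(\cB_\Lambda(1{:}n,1{:}n)\big)$ consists solely of solid edges.

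First I would establish a lower bound of $n$ on $|\Lambda|$. By condition (iii) of Theorem \ref{thm:s-controllability}, structural controllability forces $\cG\big(\cB_\Lambda(1{:}n,1{:}n)\big)$ to be connected on $n$ vertices, hence it must contain at least $n-1$ solid edges (a spanning tree of $\{1,\ldots,n\}$). Additionally, $\cG\big(\cB_\Lambda\big)$ must be connected on all $n+1$ vertices, so vertex $n+1$ needs at least one incident edge, which can only be a broken edge $(k,n+1)$ for some $k \in \{1,\ldots,n\}$. Thus any feasible $\Lambda$ satisfies $|\Lambda|\ge(n-1)+1=n$.

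Next I would show this bound is tight by checking that every $\cG_T \in \cT_{n+1}$ yields a feasible pattern. Since $\cG_T$ is a spanning tree on $n+1$ vertices, it has exactly $n$ edges and is connected; since its restriction to $\{1,\ldots,n\}$ is also a spanning tree, that restriction is connected and contains exactly $n-1$ edges, leaving exactly one edge incident to $n+1$, which is necessarily broken. Defining $\Lambda$ from $\cG_T$ via Definition \ref{def:pattern-graph} then satisfies condition (iii) of Theorem \ref{thm:s-controllability}, and $|\Lambda|=n$, so $\cG_T$ corresponds to an optimal solution.

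Finally I would prove the bijection by showing the reverse inclusion, namely that every optimal $\Lambda$ arises this way. If $|\Lambda|=n$ and $\cB_\Lambda$ is structurally controllable, then the $n-1$-edge lower bound on solid edges forces at least $n-1$ of the edges of $\cG\big(\cB_\Lambda\big)$ to be solid edges inside $\{1,\ldots,n\}$; the remaining at most one edge must be broken. Since at least one broken edge is required to reach vertex $n+1$, there is exactly one broken edge and exactly $n-1$ solid edges. Connectedness of $\cG\big(\cB_\Lambda(1{:}n,1{:}n)\big)$ with exactly $n-1$ edges forces those solid edges to form a spanning tree on $\{1,\ldots,n\}$. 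Adding the one broken edge keeps the full graph acyclic with $n$ edges on $n+1$ vertices, hence a spanning tree. Therefore $\cG\big(\cB_\Lambda\big) \in \cT_{n+1}$, and the correspondence $\cG_T \leftrightarrow \Lambda$ is a bijection. The only mild obstacle is being careful that the edge count inside $\{1,\ldots,n\}$ is exactly $n-1$ (rather than more) so that the restriction is a tree and not merely a connected graph; this follows from the total edge budget of $n$ together with the necessity of at least one broken edge.
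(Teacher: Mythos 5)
Your proof is correct and takes essentially the same route as the paper: both reduce Problem \ref{prob-2} via condition (iii) of Theorem \ref{thm:s-controllability} to the spanning-tree characterization embodied in $\cT_{n+1}$. Your explicit counting argument (optimal value $n$, exactly $n-1$ solid edges plus exactly one broken edge) is simply a more detailed substitute for the paper's shorter edge-dropping contradiction in the converse direction.
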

\begin{proof}
Let $\cG_{T}\in \cT_{n+1}$; there is a unique pattern $\hat{\Lambda}$ given by nonzero entries of $\cB\big(\cG_{T}\big)$. It follows from the definition of $\cT_{n+1}$ and Theorem \ref{thm:s-controllability} that $\hat{\Lambda}$ is a solution of Problem \ref{prob-2}.
 
Conversely, suppose $\hat{\Lambda}$ is a solution of Problem \ref{prob-2}, and
let ${\cG}\big(\cB_{\hat{\Lambda}}\big)$ be the associated graph. We show by contradiction that ${\cG}\big(\cB_{\hat{\Lambda}}\big)\in \cT_{n+1}$.
If ${\cG}\big(\cB_{\hat{\Lambda}}\big)$ or ${\cG}\big(\cB_{\hat{\Lambda}}(1:n,1:n)\big)$ strictly contains a spanning tree, we can drop some edges such that both ${\cG}\big(\cB_{\hat{\Lambda}}\big)$ and ${\cG}\big(\cB_{\hat{\Lambda}}(1:n,1:n)\big)$ contain a spanning tree. This contradicts that $\hat{\Lambda}$ is a sparsest pattern for which \eqref{eq:driftless-bilinear-SE(n)} is structurally controllable.
\end{proof}

Next, we consider the case of non-uniform cost associated with each nonzero entry in $\cB_\Lambda$.
Using the structure of $\cSE(n)$, we assign a non-uniform cost matrix $C\in \R^{(n+1)\times(n+1)}$ as follows:
\begin{equation}
\label{eq:cost_mat}
C(i,j) = 
\begin{cases}
c_{ij}>0, & \text{if } i<j, \\
0, & \text{if } i=j \mbox{ or } i=n+1, 
\end{cases}
\end{equation}
together with $C(j,i)=C(i,j)$ for $1\le i <j\le n$. Since the entries $\cB_\Lambda(i,j)$ and $\cB_\Lambda(j,i)$ are dependent for $1\le i,j\le n$, we have assigned the same cost for each such pair.
Once the cost matrix is defined, we can form a weighted undirected graph $\cG\big(\cB_\Lambda\big)$.
 
We consider the following problem: 
\begin{mini}
{\Lambda}{\sum_{(i,j)\in \Lambda} C(i,j)}{\label{eq:weighted_cost}}{}
\addConstraint{\mbox{ structural controllability of \eqref{eq:driftless-bilinear-SE(n)}.}}{}{\qquad}
\end{mini}
From the cost matrix $C$ in \eqref{eq:cost_mat}, we can form a weighted complete graph $\cK_{n+1}$ on $(n+1)$ vertices with the edge weights $w(i,j)= C(i,j)+C(j,i)$.
It follows from Theorem \ref{thm:optimal s-controllability} that the search space for \eqref{eq:weighted_cost} can be restricted to $\cT_{n+1}$.
We give a simple greedy algorithm (Algorithm 1) to solve \eqref{eq:weighted_cost}.
\begin{algorithm}
\caption{An algorithm to solve \eqref{eq:weighted_cost}}
\begin{algorithmic}[1]\label{algo:minspan}
\State $\cH_T \gets$ a minimum spanning tree on $\cK_{n}$ (the complete graph on the vertex set $\{1,\ldots,n\}$).
\If{$e=(i,n+1)$, $(1\le i\le n)$ is an edge with minimum weight incident on the vertex $n+1$} $\cG_{T}\gets \cH_T \cup e$.
\EndIf
\State\Return $\cG_{T}$.
\end{algorithmic}
\end{algorithm}
The minimum spanning tree in Step 1 of Algorithm 1 can be obtained using any of the minimum spanning tree algorithms in the literature \cite{West}.
If Prim's algorithm \cite{West} with time complexity $O(|\cE|\log|\cV|)$ or $O(|\cE|+|\cV|\log|\cV|)$ (depending on the data structure used) is employed in Step 1, then Algorithm  1 has time complexity $O\big(|\cE|\log|\cV|+$deg$(n+1)\big)$ or $O\big(|\cE|+|\cV|\log|\cV|+$deg$(n+1)\big)$, where deg$(n+1)$ is the degree of the $(n+1)^{\rm th}$ vertex.

\section{Conclusion}
\label{sec:conclusion}
We obtained graph theoretic necessary and sufficient conditions for  structural controllability and accessibility of bilinear systems on $\SE(n)$.
Furthermore, we studied the problem of finding a sparsest pattern for structural controllability. 
Using these results and the notion of $k$-edge connectedness \cite{West} of graphs, one can obtain equivalent conditions for structural controllability of bilinear systems on $\SE(n)$ under $k$ link failures; we avoid giving explicit details for the sake of brevity.

\appendix
\section{Appendix}
\subsection*{Auxiliary Results}
The following results are used in the proof of Theorem 3.1. 
\begin{lemma}
\label{lem:identity-operators}
Let $\cD^{(0)} =\cB_\Lambda$ and $\cG^{(0)} =\cG\big(\cB_\Lambda\big)$, where $\cB_\Lambda\subseteq \cSE(n)$. 
For $i \geq 0$, 
\begin{equation*}
\cB\big(\cG\big(\cD^{(i)}\big)\big) = \cD^{(i)} \quad\mbox{and}\quad \cG\big(\cB\big(\cG^{(i)}\big) \big) = \cG^{(i)}.
\end{equation*}
\end{lemma}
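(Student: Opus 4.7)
The plan is to exhibit $\cB$ and $\cG$ as mutually inverse operators on two carefully chosen classes: canonical subspaces of $\cSE(n)$ (i.e., spans of subsets of the standard basis $\cBs$) and undirected simple graphs whose solid edges lie strictly among the vertices $\{1,\ldots,n\}$ and whose broken edges each join one of $\{1,\ldots,n\}$ to vertex $n+1$. Once this mutual inversion is established, the lemma reduces to verifying that every $\cD^{(i)}$ sits in the first class and every $\cG^{(i)}$ in the second.

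First, I would verify the mutual inverse property directly from the definitions. Given a canonical subspace $\cD = \Span(\cBs_\Lambda)$, the construction in Definition \ref{def:pattern-graph} places a solid edge $(i,j)$ precisely when $\tilde{\Omega}_{ij}\in\cBs_\Lambda$ and a broken edge $(k,n+1)$ precisely when $E_{k(n+1)}\in\cBs_\Lambda$. Applying the rule \eqref{eq:subspace-B} then restores free parameters at exactly those off-diagonal positions with $j<k$, together with the skew-symmetric reflection on $\{1,\ldots,n\}\times\{1,\ldots,n\}$ and zero $(n+1)^{\rm th}$ row, so the span of the resulting pattern is $\cD$. The reverse identity $\cG(\cB(\cG'))=\cG'$ is symmetric, using that within the graph class described above, the solid/broken label of an edge is uniquely determined by whether or not it is incident to vertex $n+1$.

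Next, I would establish by induction on $i$ that $\cD^{(i)}$ is canonical. The base case $i=0$ is the hypothesis $\cD^{(0)}=\cB_\Lambda$. For the inductive step, \eqref{eq:derived-distribution} expresses $\cD^{(i)}$ as $\cD^{(i-1)}$ plus the span of Lie brackets of its elements; by bilinearity it suffices to consider brackets of standard basis elements, and the identities \eqref{eq:ELA-LB} show that each such bracket is either $0$ or a $\pm 1$ multiple of a single element of $\cBs$. Hence $\cD^{(i)}$ is canonical. An analogous induction, this time on the loop index $l$ in Definition \ref{def:transitive-closure}, shows that $\cG^{(i)}$ remains in the required graph class: the three cases in the definition exactly mirror the three Lie bracket rules \eqref{eq:ELA-LB-1}--\eqref{eq:ELA-LB-3}, so any edge added carries the correct solid/broken label and respects the incidence constraint with vertex $n+1$.

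The main obstacle is the bookkeeping of solid/broken labels across Definition \ref{def:transitive-closure}: one must confirm that its case analysis truly mirrors the Lie bracket structure \eqref{eq:ELA-LB}, so that the graph class in which the $\cG^{(i)}$ live corresponds bijectively via $\cB$ and $\cG$ to the canonical subspace class in which the $\cD^{(i)}$ live. Once this correspondence is in place, combining the three ingredients above yields $\cB(\cG(\cD^{(i)}))=\cD^{(i)}$ and $\cG(\cB(\cG^{(i)}))=\cG^{(i)}$ for all $i\geq 0$.
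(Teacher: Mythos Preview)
Your proposal is correct and follows the same approach as the paper, which simply notes that the lemma follows from Definition~\ref{def:pattern-graph} and \eqref{eq:subspace-B} because $\cB\circ\cG$ and $\cG\circ\cB$ act as identity operators on the relevant classes. You supply considerably more detail than the paper does---in particular, your inductive verification that each $\cD^{(i)}$ is canonical (via the bracket identities \eqref{eq:ELA-LB}) and that each $\cG^{(i)}$ retains the correct solid/broken edge structure makes explicit what the paper takes for granted in the paragraph preceding the lemma.
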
 
Lemma \ref{lem:identity-operators} follows from Definition \ref{def:pattern-graph} and \eqref{eq:subspace-B}. 
In particular, it follows from the construction of $\cG$ and $\cB$ that $\cB\circ \cG$ and $\cG \circ \cB$ act as identity operators on $\cD^{(i)}$ and $\cG^{(i)}$, respectively. 

The following lemma expounds the relation between transitive closures and derived distributions corresponding to $\cB_\Lambda\subseteq \cSE(n)$.
\begin{lemma}
\label{lem:core-lemma}
Let $\cD^{(0)} =\cB_\Lambda$ and $\cG^{(0)} =\cG\big(\cB_\Lambda\big)$, where $\cB_\Lambda\subseteq \cSE(n)$.
Then, $\cG\big(\cD^{(i)}\big) = \cG^{(i)}$ and $\cB\big(\cG^{(i)}\big) = \cD^{(i)}$ for $i \geq 0$.
Moreover, $\cD^{(n)}=\overline{\cD}$.
\end{lemma}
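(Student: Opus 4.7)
The plan is to prove the two equalities $\cG(\cD^{(i)}) = \cG^{(i)}$ and $\cB(\cG^{(i)}) = \cD^{(i)}$ simultaneously by induction on $i$, and then to deduce $\cD^{(n)} = \overline{\cD}$ by showing that the transitive-closure procedure stabilizes within $n$ steps. The base case $i = 0$ is immediate from the hypotheses $\cD^{(0)} = \cB_\Lambda$ and $\cG^{(0)} = \cG(\cB_\Lambda)$ together with Lemma \ref{lem:identity-operators}, which guarantees that $\cG$ and $\cB$ act as mutual inverses on canonical subspaces and their associated graphs.

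For the inductive step, I would assume $\cG(\cD^{(i-1)}) = \cG^{(i-1)}$ and $\cB(\cG^{(i-1)}) = \cD^{(i-1)}$, so that $\cD^{(i-1)}$ is a canonical subspace whose standard-basis elements are indexed precisely by the edges of $\cG^{(i-1)}$, with solid edges labelling the $\tilde{\Omega}_{ij}$ generators and broken edges labelling the $E_{k(n+1)}$ generators. By \eqref{eq:derived-distribution}, $\cD^{(i)}$ is spanned by $\cD^{(i-1)}$ together with all pairwise brackets of its standard-basis elements, and the three identities \eqref{eq:ELA-LB-1}--\eqref{eq:ELA-LB-3} align exactly with the three branches of Definition \ref{def:transitive-closure}: a bracket of two solid generators $\tilde{\Omega}_{ij}, \tilde{\Omega}_{jk}$ sharing middle vertex $j$ produces (up to sign) the solid generator $\tilde{\Omega}_{ik}$; a bracket of a solid generator $\tilde{\Omega}_{ij}$ with a broken generator $E_{j(n+1)}$ produces the broken generator $E_{i(n+1)}$; two broken generators commute; and all remaining brackets (with no shared index) vanish via the Kronecker deltas. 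Hence $\cD^{(i)}$ remains canonical with standard-basis elements corresponding exactly to the edges of $\cG^{(i)}$, yielding $\cG(\cD^{(i)}) = \cG^{(i)}$; applying $\cB$ and invoking Lemma \ref{lem:identity-operators} then gives $\cB(\cG^{(i)}) = \cD^{(i)}$.

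For the final claim $\cD^{(n)} = \overline{\cD}$, I would observe that each application of the transitive-closure step composes reachability with itself, so the edges of $\cG^{(l)}$ correspond to type-respecting walks of length at most $2^l$ in $\cG^{(0)}$; since the vertex set has size $n+1$ this saturates well before $l = n$, giving $\cG^{(n+1)} = \cG^{(n)}$ and hence $\cD^{(n+1)} = \cD^{(n)}$ by the established correspondence. Thus $\cD^{(n)}$ is closed under the Lie bracket and contains $\cD^{(0)}$, so it contains $\overline{\cD}$; the reverse inclusion is immediate since every $\cD^{(i)}$ lies inside $\overline{\cD}$ by construction. The main obstacle is the bookkeeping in the inductive step: one must verify that the canonical-subspace property is preserved so that the edge/generator dictionary remains meaningful throughout, and in particular the vanishing of the broken--broken bracket must be matched carefully against the ``no edge added'' branch of Definition \ref{def:transitive-closure}, which is precisely what encodes that the translational generators $E_{k(n+1)}$ pairwise commute.
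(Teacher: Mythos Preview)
Your proposal is correct and follows essentially the same approach as the paper's proof: induction on $i$, matching the Lie bracket identities \eqref{eq:ELA-LB} against the three branches of Definition~\ref{def:transitive-closure}, and deducing $\cD^{(n)} = \overline{\cD}$ from the stabilization $\cG^{(n+1)} = \cG^{(n)}$. The paper carries out the inductive step via an explicit double inclusion $\cG^{(i+1)} \subseteq \cG(\cD^{(i+1)})$ and $\cG(\cD^{(i+1)}) \subseteq \cG^{(i+1)}$ with a case split on whether the edge is new, whereas you phrase it as a generator/edge dictionary that is preserved, but the content is the same.
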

\begin{proof}
Using Definition \ref{def:pattern-graph} and \eqref{eq:subspace-B}, we get $\cG\big(\cD^{(0)}\big)=\cG^{(0)}$ and $\cB\big(\cG^{(0)}\big)=\cD^{(0)}$, which is the base step of induction. 
Next, we show that if the assertion holds for $i=i_1 \in \N$, then it holds for $i=i_1+1$.
To show that $\cG\big(\cD^{(i_1+1)}\big)=\cG^{(i_1+1)}$, we first show that $\cG^{(i_1+1)}\subseteq \cG\big(\cD^{(i_1+1)}\big)$.
Let $(p,r)\in\cG^{(i_1+1)} \setminus \cG^{(i_1)}$.
Since $\cG^{(i_1+1)}$ is the $1$-step transitive closure of $\cG^{(i_1)}$, there exists $q\in \cV$ such that $(p,q),(q,r)\in\cG^{(i_1)}=\cG\big(\cD^{(i_1)}\big)$. As explained below, it can be shown by contradiction that $q \neq n+1$. If $q=n+1$, then $(p,q), (q,r) \in \cE^{(i_1)}_b$. Consequently, by Definition \ref{def:transitive-closure}, $(p,r)\notin \cG^{(i_1+1)}$ which is a contradiction. Therefore, without loss of generality, we have the following cases.
\begin{description}
\item[Case I:] $p,q,r<n+1$.
By the induction hypothesis, $\cB\big(\cG^{(i_1)}\big)=\cD^{(i_1)}$; therefore, in this case, $\tilde{\Omega}_{pq},\tilde{\Omega}_{qr}\in \cD^{(i_1)}$.
Now, it follows from \eqref{eq:ELA-LB-1} and \eqref{eq:derived-distribution} that $\tilde{\Omega}_{pr}\in \cD^{(i_1+1)}$; thus, $(p,r)\in \cG\big(\cD^{(i_1+1)}\big)$.
\item[Case II:] $p,q < n+1$ and $r=n+1$.
It can be shown on similar lines to Case I that $E_{p(n+1)}\in \cD^{(i_1+1)}$, which implies that $(p,n+1)\in \cG\big(\cD^{(i_1+1)}\big)$. 
\end{description}
Next, if $(p,r) \in \cG^{(i_1+1)} \cap \cG^{(i_1)}$, then by the induction hypothesis, $\tilde{\Omega}_{pr}\in \cD^{(i_1)}\subseteq \cD^{(i_1+1)}$ which implies that $(p,r)\in \cG\big(\cD^{(i_1+1)}\big)$. This concludes that $\cG^{(i_1+1)}\subseteq \cG\big(\cD^{(i_1+1)}\big)$.

To show that $\cG\big(\cD^{(i_1+1)}\big)\subseteq \cG^{(i_1+1)}$, we use similar arguments as above.
In particular, if $(p,r)\in \cG\big(\cD^{(i_1+1)}\big)$ and $p,r<n+1$, then $\tilde{\Omega}_{pr}\in \cD^{(i_1+1)}$.
Now, if $\tilde{\Omega}_{pr}\in \cD^{(i_1)}$, then $(p,r)\in \cG^{(i_1)}$ which implies that $(p,r)\in\cG^{(i_1+1)}$.
Whereas if $\tilde{\Omega}_{pr} \notin \cD^{(i_1)}$, then it follows from the construction of $\cD^{(i_1+1)}$ (see \eqref{eq:derived-distribution}) and \eqref{eq:ELA-LB} that there exist $q \in \{1,\ldots,n\}$ such that $\tilde{\Omega}_{pq},\tilde{\Omega}_{qr}\in \cD^{(i_1)}$ and $\big[\tilde{\Omega}_{pq}, \tilde{\Omega}_{qr}\big] = \tilde{\Omega}_{pr}$.
Consequently, by the induction hypothesis and Definition \ref{def:transitive-closure}, we can conclude that $(p,q),(q,r)\in\cG^{(i_1)}$ and $(p,r)\in \cG^{(i_1+1)}$.
The case $(p,r)\in \cG\big(\cD^{(i_1+1)}\big)$ for $p<n+1$ and $r=n+1$ can be proved on similar lines. Therefore, we have $\cG^{(i_1+1)} = \cG\big(\cD^{(i_1+1)}\big)$. 
Subsequently, using Lemma \ref{lem:identity-operators}, 
\begin{equation*}
\cB\big(\cG^{(i_1+1)}\big)=\big(\cB\circ\cG\big)\big(\cD^{(i_1+1)}\big)=\cD^{(i_1+1)}.
\end{equation*}

Recall from Definition \ref{def:transitive-closure} that $\cG^{(n)}$ is the transitive closure of $\cG^{(0)}$. Thus, $\cG^{(n+j)}=\cG^{(n)}$ for all $j \geq 1$.
Now, since $\cG^{(n)}=\cG^{(n+1)}$, we have $\cB\big(\cG^{(n)}\big)=\cB\big(\cG^{(n+1)}\big)$; equivalently, $\cD^{(n)}=\cD^{(n+1)}$.
As a result, $\cD^{(n)}=\overline{\cD}$. 
\end{proof}

\begin{remark}
\label{rmk:auxiliary-results}
Lemmas \ref{lem:identity-operators} and \ref{lem:core-lemma} are in fact counterparts of results in \cite[Sec. III]{Tsope}, where directed graphs are associated with pattern matrices over $\cGL(n)$ and $\cSL(n)$.
However, the notion of solid and broken edges in Definition \ref{def:pattern-graph} play a pivotal role here in determining the transitive closure of $\cG({\cB_{\Lambda}})$. 
Whereas this idea is not needed for the analysis done in \cite{Tsope} due to the structure of $\cGL(n)$ and $\cSL(n)$.
Thus, Lemmas \ref{lem:identity-operators} and \ref{lem:core-lemma} are not immediate consequences of \cite[Sec. III]{Tsope}. 
More specifically, when $\cB_{\Lambda}\subseteq \cSE(n)$, ideas in \cite{Tsope} will not work directly in the construction of $\cG(\cB_{\Lambda})$.
\end{remark}


\bibliographystyle{elsarticle-num}        
\bibliography{References}           



\end{document}